\renewcommand{\@seccntformat}[1]
{\csname the#1\endcsname.\enspace}
\theoremstyle{definition}
\newtheorem{conjecture}{Conjecture}
\newtheorem{lemma}{Lemma}
\newtheorem{remark}{Remark}
\newtheorem{example}{Example}
\newcommand{\K}{\text{K}}
\newcommand{\Y}{\textbf{Y}}
\newcommand{\X}{\textbf{X}}
\newcommand{\x}{\textbf{x}}
\newcommand{\EE}{\mathbb{E}}
\newcommand{\PP}{\mathbb{P}}
\begin{document}

\title{\textbf{\sc On Uncertainty and  Information Properties Of   Ranked Set Samples}}
\author{Mohammad Jafari Jozani${}^{a,}$\footnote{Corresponding author.
 \newline
{E-mail addresses:} m$_{-}$jafari$_{-}$jozani@umanitoba.ca, (M. Jafari Jozani), ahmadi-j@um.ac.ir (J. Ahmadi).
}~~
 and  Jafar  Ahmadi${}^{b}$\\\\
 \footnotesize{${}^a$University of Manitoba, Department of Statistics, Winnipeg, MB, CANADA, R3T 2N2 } \\\footnotesize{${}^{b}$Department of Statistics,  Ferdowsi University of Mashhad,   P.O. Box 91775-1159,  Mashhad,\ Iran }
  }
\date{}
\maketitle
\vspace{-1cm}
\begin{abstract}
Ranked set sampling is a sampling design which has a wide range of applications in industrial statistics,  and environmental and ecological studies, etc.. It  is well known that ranked set samples provide more Fisher information than simple random samples of the same size  about the unknown parameters of the underlying distribution in parametric inferences.  In this paper, we consider the uncertainty and  information content of ranked set samples in both perfect and imperfect ranking scenarios
in terms of Shannon entropy, R\'enyi  and Kullback-Leibler (KL) information measures.
It is proved that under these information measures, ranked set sampling design performs better than  its simple random sampling  counterpart of the same size. The information content is also a monotone function of the set size in ranked set sampling. Moreover, the effect of ranking error on the information content of the data is  investigated. 
\end{abstract}
\noindent \textbf{AMS  2010 Subject Classification:} 62B10, 62D05. \\
\textbf{Keywords:}   Ranked set sampling;  Kullback-Leibler;   Order statistics; Imperfect ranking; R\'enyi Information; Shannon entropy.

\section{Introduction and Preliminaries }
During the past few years, ranked
set sampling  has  emerged  as a powerful tool in statistical inference, and it is now regarded as a serious alternative
to the commonly used simple random sampling  design. Ranked set sampling and some of its variants  have been applied successfully in different areas of applications such as  industrial statistics, environmental and ecological studies, biostatistics and statistical genetics.
 The feature of ranked set sampling  is that it combines simple random sampling with other  sources of information  such as professional knowledge,   auxiliary information, judgement,  etc., which are assumed to be inexpensive and easily obtained. This extra information helps to  increase the chance that the collected sample  yields more representative measurements (i.e., measurements that span the range of the value of the variable of interest in the underlying population).   In its original form, ranked set sampling  involves randomly drawing $k$ units (called a set of size $k$) from the underlying population for which an estimate of the unknown parameter of interest  is required. The units of this set are ranked by means of an auxiliary variable or some other ranking process such as judgmental ranking.  For this ranked set,  the unit ranked lowest is chosen for actual measurement of the variable of interest. A second set of size $k$ is then  drawn and ranking carried out. The unit in the second lowest position is chosen and the variable of interest for this unit is quantified. Sampling is continued until, from the $k$th set, the $k$th  ranked unit is measured. This entire process may be repeated $m$ times (or cycles) to obtain a ranked set sample of size $n=mk$ from the underlying population.

Let $\X_{SRS}=\{ X_i, i=1, \ldots, n\}$ be a simple random sample (SRS) of size $n\geq 1$ from  a continuous  distribution with probability distribution function (pdf) $f(x)$. Let $F(x)$  denote the cumulative distribution function (cdf) of the random variable $X$  and define  $\bar{F}(x)= 1-F(x)$ as the survival function of $X$  with support $S_X$. Also assume that $\X_{RSS}=\{ X_{(i)j}, i=1, \ldots, k, j=1, ..., m\}$ denotes a ranked set sample  (RSS) of size $n=mk$ from $f(x)$ where $k$ is the set size and $m$ is the cycle size. Here  $X_{(i)j}$ is the $i$th order statistic in a set of size $k$ obtained in cycle $j$ with pdf
 $$f_{(i)}(x)= \frac{k!}{(i-1)! (k-i)!} F^{(i-1)}(x) \bar{F}^{(k-i)}(x) f(x),  \  \  \   x\in S_X.$$  
 
When ranking is imperfect we use $\X^*_{RSS}=\{ X_{[i]j}, i=1, \ldots, k, j=1, ..., m\}$ to denote an imperfect RSS of size $n=mk$ from $f(x)$. We also use $f_{[i]}(x)$ to show the pdf of the judgemental  order statistic  $X_{[i]}$ which is given by 
\begin{equation}
\label{imperfect-pdf}
f_{[i]}(x) = \sum_{r=1}^n p_{i,r} f_{(r)}(x),
\end{equation}
 where $p_{i,r}=\PP(X_{[i]}= X_{(r)} )$ denotes the probability with which the $r$th order statistic is judged as having rank $i$ with $\sum_{i=1}^kp_{i,r}= \sum_{r=1}^k p_{i, r}=1$. 
Readers are referred to Wolfe (2004, 2010), Chen et al.\ (2004) and references therein for further details.

The Fisher information plays a central role in statistical inference and  information theoretic studies.  It  is well known that RSS  provides more Fisher information than SRS  of the same size  about the unknown parameters of the underlying distribution in parametric inferences (e.g., Chen, 2000,   Chapter 3).    Park and Lim (2012) studied the effect of imperfect rankings on the amount of Fisher information in ranked set samples.
 Frey (2013) showed  by example that the Fisher information in an imperfect ranked set sample may be higher than the Fisher information in a perfect ranked-set sample.
  The concept of information is so rich that there is no single definition that will be able to quantify the information content of a sample properly. For example, from an engineering perspective,  the Shannon entropy or the R\'enyi information might be more suitable to be used  as  measures to quantify the information content of a sample than the Fisher information.  
In this paper, we study the notions of uncertainty and  information content of RSS data in both perfect and imperfect ranking scenarios under  the Shannon entropy, R\'enyi  and Kullback-Leibler (KL) information measures and compare them  with their counterparts with   SRS data. These measures are  increasingly being used in various contexts such as order statistics by Wong and Chen (1990) and Park (1995),  Ebrahimi et al. (2004), Bratpour et al. (2007a, b), censored data by    Abo-Eleneen,  (2011), record data  and reliability and life testing context by  Raqab and Awad (2000, 2001), Zahedi and Shakil (2006),   Ahmadi and Fashandi (2008) and in testing hypothesis by Park (2005), Balakrishnan  et al.  (2007)  and Habibi Rad et al.  (2011). So,  it would be of interest to use these measures to calculate  the information  content of RSS data and compare  them with  their counterparts  with   SRS  data. 

To this end, in Section \ref{sec-shannon}, we obtain  the Shannon entropies of RSS and SRS data of the same size. We show that the difference between the Shannon entropy of $\X_{RSS}$ and $\X_{SRS}$ is distribution free and it is a monotone function of the set size in ranked set sampling.  In Section \ref{sec-renyi}, similar results are obtained under the R\'enyi  information. 
Section \ref{sec-kl} is devoted to the Kullback-Leibler information of RSS data and its comparison with its counterpart under SRS data.   We show  that the Kullback-Leibler information  between
the distribution of  $\X_{SRS}$ and  distribution of $\X_{RSS}$ is distribution-free and  increases as the set size increases. Finally,  in Section \ref{sec-concluding},  we provide some concluding  remarks.

%
%

\section{Shannon Entropy of Ranked Set Samples } \label{sec-shannon}
The Shannon entropy or simply the entropy of a continuous random variable $X$   is defined by 
\begin{align}
\label{e1}
   {\rm H}(X)=-\int f(x)\log f(x) \, dx,
\end{align}
provided the integral exists. 
The Shannon entropy is extensively used in the  literature as a
quantitative measure of uncertainty associated with a random
phenomena. The development of the idea of the entropy by Shannon (1948) initiated  a separate branch of learning named the ``Theory of Information".  
 The Shannon entropy  provides an excellent tool to quantify the amount of information (or uncertainty) contained in a sample regarding its parent distribution.
Indeed, the amount of information which we get when we observe the result on a random experiment can be taken to be equal to the amount of uncertainty concerning the outcome of the experiment before carrying it out.   In practice, smaller values of the Shannon entropy are more desirable.  We refer the reader to Cover and Thomas (1991) an  references therein for more details.   In this section, we compare the  Shannon entropy  of SRS data with  its counterparts under both perfect and imperfect RSS data  of the same size.  Without loss of generality\textcolor[rgb]{1,0,0}{,} we take $m=1$ throughout the paper.  From \eqref{e1}, the Shannon  entropy of $\X_{SRS}$ is given by
\begin{eqnarray*}
{\rm H}(\X_{SRS}) = -\sum_{i=1}^n \int f(x_i) \log f(x_i)\, dx_i= n {\rm H}(X_1).
\end{eqnarray*}
\noindent Under the perfect ranking assumption, it is easy to see that
\begin{eqnarray}\label{H1}
{\rm H} (\X_{RSS})= -\sum_{i=1}^n \int f_{(i)} (x) \log f_{(i)}(x)\, dx = \sum_{i=1}^n H(X_{(i)}),
\end{eqnarray}
where ${\rm H} (X_{(i)})$ is the entropy of the $i$th order statistic in a sample of size $n$.  Ebrahimi et al. (2004) explored some properties of the  Shannon entropy of the usual  order statistics (see also, Park, 1995; Wong and Chen, 1990).  Using \eqref{e1} and the transformation   $X_{(i)}=F^{-1}(U_{(i)})$ it is easy to prove the following 
   representations for  the Shannon  entropy of order statistics (see,  Ebrahimi et al. 2004,  page 177):
   \begin{equation}
   \label{eo-1}
  {\rm  H}(X_{(i)})={\rm H}(U_{(i)})- \EE\left[\log {[f(F^{-1}(W_i))]}\right],
   \end{equation}
where  $W_i$ has the beta distribution with parameters $i$ and $n-i+1$ and  $U_{(i)}$ stands for the $i$th order statistic  of  a random sample of  size $n$ from the Uniform$(0, 1)$  distribution.

\noindent In the following result, we show that the Shannon entropy  of RSS data is smaller than its SRS counterpart when ranking is perfect. 

\begin{lemma}\label{H(XRSS)} ${\rm H}(\X_{RSS})\leq {\rm H}(\X_{SRS})$ for all set size $n\in\mathbb{N}$ and the equality  holds when $n=1$.
\end{lemma}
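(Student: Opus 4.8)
The plan is to isolate the distribution-dependent part of ${\rm H}(\X_{RSS})$ using the representation \eqref{eo-1} for the entropy of order statistics, show that it cancels exactly against ${\rm H}(\X_{SRS})$, and be left with a distribution-free remainder that is visibly nonpositive.

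First I would combine \eqref{H1} with ${\rm H}(\X_{SRS})=n\,{\rm H}(X_1)$ and substitute \eqref{eo-1} into each summand, obtaining
\[
{\rm H}(\X_{RSS}) = \sum_{i=1}^{n}{\rm H}(U_{(i)}) \;-\; \sum_{i=1}^{n}\EE\!\left[\log f\!\left(F^{-1}(W_i)\right)\right],
\]
where $W_i$ has the $\mathrm{Beta}(i,n-i+1)$ distribution. The key step is then to observe that the arithmetic mean over $i=1,\dots,n$ of the $\mathrm{Beta}(i,n-i+1)$ densities is the $\mathrm{Uniform}(0,1)$ density: by the binomial theorem,
\begin{align*}
\frac{1}{n}\sum_{i=1}^{n}\frac{n!}{(i-1)!\,(n-i)!}\,w^{i-1}(1-w)^{n-i}
&= \sum_{i=1}^{n}\binom{n-1}{i-1}w^{i-1}(1-w)^{n-i} \\
&= \bigl(w+(1-w)\bigr)^{n-1}=1, \qquad w\in(0,1).
\end{align*}
Hence $\sum_{i=1}^{n}\EE[\log f(F^{-1}(W_i))] = n\int_0^1 \log f(F^{-1}(u))\,du = n\int_{S_X} f(x)\log f(x)\,dx = -\,n\,{\rm H}(X_1)$, which cancels the term $n\,{\rm H}(X_1)$ coming from ${\rm H}(\X_{SRS})$. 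What survives is the distribution-free identity
\[
{\rm H}(\X_{RSS}) - {\rm H}(\X_{SRS}) = \sum_{i=1}^{n}{\rm H}(U_{(i)}),
\]
so the difference depends on the underlying $f$ only through the sample size $n$ — which, incidentally, also establishes the "distribution free" assertion made in the introduction.

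Finally I would invoke the maximum-entropy property of the uniform law on a bounded interval: for any density $g$ supported on $[0,1]$ one has ${\rm H}(g)\le {\rm H}(\mathrm{Uniform}(0,1))=0$, with equality precisely when $g$ is the uniform density (immediate from nonnegativity of the Kullback--Leibler divergence, Gibbs' inequality). Applying this to each $U_{(i)}$ gives ${\rm H}(U_{(i)})\le 0$, whence $\sum_{i=1}^{n}{\rm H}(U_{(i)})\le 0$ and the claimed inequality follows; equality forces every $U_{(i)}$ to be $\mathrm{Uniform}(0,1)$, which happens exactly when $n=1$.

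I do not anticipate a genuine obstacle here: the density-averaging identity is just the binomial theorem, and the closing bound is a textbook fact. The only care needed is with regularity — continuity of $F$, finiteness of ${\rm H}(X_1)$, and legitimacy of exchanging the finite sum with the expectation and with the change of variables $u=F(x)$ — all of which hold under the standing assumptions and are precisely what is required to use \eqref{eo-1} in the first place.
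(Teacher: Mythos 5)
Your proof is correct, but it takes a genuinely different route from the paper's. The paper proves the lemma by a pointwise Jensen argument: it writes $f(x)=\frac1n\sum_{i=1}^n f_{(i)}(x)$, applies the convexity of $t\mapsto t\log t$ to obtain $\frac1n\sum_{i=1}^n f_{(i)}(x)\log f_{(i)}(x)\ \geq\ f(x)\log f(x)$ for every $x$, and integrates. You instead push everything through the representation \eqref{eo-1}, cancel the distribution-dependent term via the binomial identity showing that the average of the $\mathrm{Beta}(i,n-i+1)$ densities is the uniform density, arrive at the exact identity ${\rm H}(\X_{RSS})-{\rm H}(\X_{SRS})=\sum_{i=1}^n{\rm H}(U_{(i)})$, and close with the maximum-entropy property of the uniform law on $[0,1]$. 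Notably, the paper derives this very identity immediately \emph{after} the lemma (in order to compute $k(n)$) and then uses the lemma to conclude $\sum_{i=1}^n{\rm H}(U_{(i)})\leq 0$; you reverse the logical direction, establishing $\sum_{i=1}^n{\rm H}(U_{(i)})\leq 0$ directly from Gibbs' inequality and deducing the lemma from it. Your route buys more: the exact distribution-free value of the gap and a clean characterization of equality (each $U_{(i)}$ would have to be uniform, and $\mathrm{Beta}(i,n-i+1)$ is uniform only when $i=n=1$), whereas the paper only asserts equality at $n=1$. The price is the extra regularity needed to invoke \eqref{eo-1} and to interchange the sum, expectation, and change of variables $u=F(x)$ --- which you correctly flag --- while the paper's convexity argument is more elementary and needs only the existence of the entropy integrals.
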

\begin{proof}
To show the result we use  the fact that  $f(x)=\frac 1n \sum_{i=1}^n f_{(i)}(x)$ (see Chen et al., 2004).
Using the convexity of $g(t)=t\log t$ as a function of $t>0$,  we have 
\begin{eqnarray}\label{H2}\frac {1}{n}   \sum_{i=1}^n f_{(i)}(x) \log f_{(i)}(x)  \geq  \left( \frac{1}{n} \sum_{i=1}^n f_{(i)}(x)\right) \left(\log  \frac{1}{n} \sum_{i=1}^n f_{(i)}(x) \right) = f(x) \log f(x).\end{eqnarray}
Now,  the result follows  by the use of  \eqref{H1} and \eqref{H2}.
\end{proof}

\vskip 3mm

\noindent In the sequel, we  quantify the difference between ${\rm H}(\X_{RSS})$ and ${\rm H}(\X_{SRS})$. To this end, by \eqref{eo-1}, we first  get
\begin{align*}
{\rm H}(\X_{RSS})
&= \sum_{i=1}^n  {\rm H}(U_{(i)}) - \int \sum_{i=1}^n  f_{(i)}(x) \log f(x) dx \\
&=  \sum_{i=1}^ n {\rm H}(U_{(i)})  + {\rm H}(\X_{SRS}).
\end{align*}
Note that since  ${\rm H}(\X_{RSS})\leq {\rm H}(\X_{SRS})$ we must have $\sum_{i=1}^n {\rm H}(U_{(i)})\leq 0$, for all $n\in\mathbb{N}$. Also, ${\rm H}(\X_{RSS})-{\rm H}(\X_{SRS})=\sum_{i=1}^n {\rm H}(U{(i)})$ is distribution-free (doesn't depend on the parent distribution). Ebrahimi et al. (2004) obtained an expression for  ${\rm H}(U_{(i)})$ which is given by
\begin{equation*}
{\rm H}(U_{(i)})=\log B(i, n-i+ 1)- (i-1)[\psi(i)-\psi(n + 1)]- (n-i)[\psi(n-i+ 1)-\psi(n + 1)],
\end{equation*}
where $\psi(z)=\frac{d}{dz}\log \Gamma(z)$ is the digamma function and $B( a, b)$ stands for the complete beta function. Hence, we have 
\begin{align*}
{\rm H}(\X_{RSS})-{\rm H}(\X_{SRS})
&=2\sum_{j=1}^{n-1}(n-2j)\log j -n \log n-  2\sum_{i=1}^n(i-1)\psi(i)+n(n-1)\psi(n + 1)\\
&=k(n), \ \ \mbox{say}.
\end{align*}
By noting that $\psi(n+1)=\psi(n)+1/n$,  for $n\geq 2$, we can easily find the following recursive formula for calculating $k(n)$:
$$k(n+1)=k(n) + n + \log \Gamma(n)-(n+1)\log(n+1).$$

\noindent  Table \ref{shannon} shows   the numerical  values of ${\rm H}(\X_{RSS})-{\rm H}(\X_{SRS})$ for $n\in\{2, \ldots,  10\}$.  From Table \ref{shannon},  it is observed   that the difference  between  the Shannon entropy of RSS data  and  its SRS counterpart increases as  the set size increases.
 However, intuitively, this can be explained by the fact that ranked set sampling provides more structure to the observed data  than simple random sampling  and the amount of the uncertainty in the more structured RSS data set is less than  that of SRS. 

\begin{table}[htdp]
\caption{ \footnotesize{The numerical values of $k(n)$  for  $n=2$ up to 10.} }
\begin{center}
\begin{tabular}{c|ccccccccccc}
 $n$&2&3&4&5&6&7&8&9&10\\
    \hline
$k(n)$ &-0.386 & -0.989 & -1.742& -2.611& -3.574& -4.616& -5.727& -6.897& -8.121\\
\end{tabular}
\end{center}
\label{shannon}
\end{table}%

\noindent Now, assume that $\X^*_{RSS}=\{ X_{[i]}, i=1, \ldots, n\} $ is  an imperfect RSS of size $n$ from $f(x)$.
Similar to the  perfect  RSS  we  can easily show that 
\begin{align}\label{H1-imp}
{\rm H}( \X^*_{RSS}) = \sum_{i=1}^n {\rm H}( X_{[i]}), 
\end{align}
 where we assume that the cycle size is equal to one and $k=n$. Also ${\rm H}(X_{[i]}) = -\int f_{[i]}(x) \log f_{[i]}(x)\, dx$,  or equivalently 
 $$ {\rm H}(X_{[i]}) = -\int \left( \sum_{r=1}^n p_{i,r} f_{(r)}(x) \right)\log \left( \sum_{r=1}^n p_{i,r}f_{(r)}(x) \right) dx.$$

\noindent Again, using the convexity of $g(t)=t\log t$ and the equalities  $\sum_{r=1}^n p_{i,r}= \sum_{i=1}^n p_{i,r}=1$, we find
\begin{align*}
{\rm H}({\X^*_{RSS} } )
&=\sum_{i=1}^n {\rm H}(X_{[i]})\\ 
&\leq -n \int  \left(\frac1n \sum_{r=1}^n (\sum_{i=1}^n p_{i,r}) f_{(r)}(x) \right)  \log   \left(\frac1n \sum_{r=1}^n (\sum_{i=1}^n p_{i,r}) f_{(r)}(x)\right) dx\\
&= - n \int f(x) \log f(x) dx\\
&={\rm H}(\X_{SRS}).
\end{align*}
So, we have the following result.

\begin{lemma}
${\rm H}(\X^*_{RSS})\leq {\rm H}(\X_{SRS})$ for all set size $n\in\mathbb{N}$ and the equality holds  when the ranking is done randomly and $p_{i,r}=\frac 1n$, for all $i, r\in\{ 1, \ldots, n\}$.
\end{lemma}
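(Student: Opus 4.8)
The plan is to reuse the same two ingredients that drove Lemma~\ref{H(XRSS)}: the additive decomposition of the entropy of the sample into a sum of marginal entropies, together with the convexity of $g(t)=t\log t$ on $(0,\infty)$. First I would record that, since an imperfect ranked set sample of size $n$ (with a single cycle, $k=n$) consists of the $n$ mutually independent judgemental order statistics $X_{[1]},\ldots,X_{[n]}$, its joint entropy splits as ${\rm H}(\X^*_{RSS})=\sum_{i=1}^n {\rm H}(X_{[i]})$, which is \eqref{H1-imp}. Each summand is written as ${\rm H}(X_{[i]})=-\int g\big(f_{[i]}(x)\big)\,dx$ with $f_{[i]}=\sum_{r=1}^n p_{i,r}f_{(r)}$ a convex combination of the order-statistics densities (nonnegative weights summing to one by $\sum_{r=1}^n p_{i,r}=1$).

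Next I would average the $n$ pointwise Jensen inequalities coming from convexity of $g$: for each fixed $x$,
\[
\frac1n\sum_{i=1}^n g\big(f_{[i]}(x)\big)\ \ge\ g\!\left(\frac1n\sum_{i=1}^n f_{[i]}(x)\right).
\]
The decisive simplification is that the argument on the right collapses to $f$: interchanging the two finite sums and using the column-sum condition $\sum_{i=1}^n p_{i,r}=1$ gives $\frac1n\sum_{i=1}^n f_{[i]}(x)=\frac1n\sum_{r=1}^n f_{(r)}(x)=f(x)$, where the last equality is the identity $f=\frac1n\sum_r f_{(r)}$ already exploited in Lemma~\ref{H(XRSS)}. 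Integrating the displayed inequality over $S_X$ and multiplying by $n$ yields $-{\rm H}(\X^*_{RSS})=\sum_{i=1}^n\int g(f_{[i]})\,dx\ \ge\ n\int g(f)\,dx=-{\rm H}(\X_{SRS})$, i.e.\ ${\rm H}(\X^*_{RSS})\le{\rm H}(\X_{SRS})$.

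Finally, for the equality assertion I would invoke strict convexity of $g$: Jensen's inequality is an equality for a.e.\ $x$ exactly when $f_{[1]}(x)=\cdots=f_{[n]}(x)$. When the ranking is performed at random we have $p_{i,r}=1/n$ for all $i,r$, hence $f_{[i]}(x)=\frac1n\sum_{r=1}^n f_{(r)}(x)=f(x)$ for every $i$, so the bound is attained and ${\rm H}(\X^*_{RSS})={\rm H}(\X_{SRS})$. There is no real obstacle here; the argument is just the imperfect-ranking analogue of Lemma~\ref{H(XRSS)} with the extra bookkeeping of the doubly stochastic matrix $(p_{i,r})$, the only mild caveat being the standard one that the relevant integrals are assumed to exist so that all the entropies involved are finite.
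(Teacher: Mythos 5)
Your argument is correct and is essentially identical to the paper's: the same decomposition ${\rm H}(\X^*_{RSS})=\sum_{i=1}^n{\rm H}(X_{[i]})$, the same pointwise Jensen step for $g(t)=t\log t$ averaged over $i$, and the same interchange of sums with the column-sum condition $\sum_{i=1}^n p_{i,r}=1$ to collapse $\frac1n\sum_i f_{[i]}$ to $f$. Your explicit verification of the equality case via $p_{i,r}=1/n$ is a small but welcome addition that the paper leaves implicit.
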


In the following result we compare the  Shannon entropies of  perfect and imperfect RSS data. We observe that the Shannon entropy of $\X_{RSS}$ is less than the Shannon entropy of $\X^*_{RSS}$.

\begin{lemma} ${\rm H}(\X_{RSS})\leq {\rm H}(\X^*_{RSS})$ for all set size $n\in\mathbb{N}$ and the equality happens when the ranking is perfect.
\end{lemma}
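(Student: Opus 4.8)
The plan is to reuse, one more time, the convexity of $g(t)=t\log t$, but now applied to the mixture representation $f_{[i]}(x)=\sum_{r=1}^{n}p_{i,r}f_{(r)}(x)$ together with the two normalization identities $\sum_{r=1}^{n}p_{i,r}=1$ (which makes each row a probability vector) and $\sum_{i=1}^{n}p_{i,r}=1$ (which will collapse the bound at the end). The essential observation is that mixing the order‑statistic densities can only raise entropy, and the column‑sum identity guarantees that the ``total mass'' of mixing is conserved when we sum over $i$.

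First I would fix $i$ and apply Jensen's inequality pointwise in $x$ to the probability vector $(p_{i,1},\dots,p_{i,n})$, obtaining $g\bigl(\sum_{r=1}^{n}p_{i,r}f_{(r)}(x)\bigr)\le\sum_{r=1}^{n}p_{i,r}\,g(f_{(r)}(x))$. Integrating over $x$ and recalling ${\rm H}(X_{[i]})=-\int g(f_{[i]}(x))\,dx$ gives the intermediate estimate
\[
{\rm H}(X_{[i]})\ \ge\ -\sum_{r=1}^{n}p_{i,r}\int g(f_{(r)}(x))\,dx\ =\ \sum_{r=1}^{n}p_{i,r}\,{\rm H}(X_{(r)}).
\]
Then I would sum over $i=1,\dots,n$, interchange the two finite sums, and use $\sum_{i=1}^{n}p_{i,r}=1$ together with the representations \eqref{H1} and \eqref{H1-imp}:
\[
{\rm H}(\X^*_{RSS})=\sum_{i=1}^{n}{\rm H}(X_{[i]})\ \ge\ \sum_{r=1}^{n}\Bigl(\sum_{i=1}^{n}p_{i,r}\Bigr){\rm H}(X_{(r)})=\sum_{r=1}^{n}{\rm H}(X_{(r)})={\rm H}(\X_{RSS}).
\]
For the equality statement I would observe that perfect ranking is exactly the case $p_{i,r}=1$ for $r=i$ and $p_{i,r}=0$ otherwise, so that $f_{[i]}\equiv f_{(i)}$ for every $i$ and every inequality above becomes an equality.

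I do not anticipate a genuine obstacle; this is essentially the same convexity device already used in Lemma~\ref{H(XRSS)} and the lemma preceding this one. The only points that deserve a word of care are (i) the standing assumption that the entropies ${\rm H}(X_{(i)})$ are finite, which legitimizes interchanging the finite sum with the integral, and (ii) checking that the convexity inequality is used in the right direction — it bounds $g(f_{[i]})$ from above and hence ${\rm H}(X_{[i]})$ from below, which is precisely what is needed. If one wished, one could further note that equality in the Jensen step forces, for almost every $x$, the densities $f_{(r)}(x)$ to agree across the support of each row $(p_{i,\cdot})$; since the $f_{(r)}$ are pairwise distinct this essentially pins down a permutation matrix, but this refinement is not required for the statement as written.
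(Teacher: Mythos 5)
Your argument is correct and coincides with the paper's own proof: the inequality $f_{[i]}(x)\log f_{[i]}(x)\leq \sum_{r=1}^{n}p_{i,r}f_{(r)}(x)\log f_{(r)}(x)$ used there is exactly your Jensen step for $g(t)=t\log t$, followed by the same interchange of sums and the identity $\sum_{i=1}^{n}p_{i,r}=1$. Your added remarks on finiteness of the entropies and on the equality case are sensible refinements but do not change the substance.
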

\begin{proof}
Using  the inequality
$ f_{[i]}(x)\, \log f_{[i]}(x) \leq \sum_{r=1}^n p_{i,r} f_{(r)}(x) \log f_{(r)}(x) $, we have 
\begin{align*}
{\rm H}(X_{[i]})
\geq -\sum_{r=1}^n p_{i,r} \int f_{(r)}(x) \log f_{(r)}(x) dx= \sum_{r=1}^n p_{i, r} H(X_{(r)}).
\end{align*}
Now, the result follows from  \eqref{H1-imp} upon changing the order of summations and using  $\sum_{i=1}^np_{i, r}=1$.
\end{proof}

Summing up, we find the following ordering relationship among the Shannon  entropies  of $\X^*_{RSS}$, $\X_{RSS}$ and $\X_{SRS}$:
$${\rm H}(\X_{RSS}) \leq {\rm H}(\X^*_{RSS})\leq {\rm H}(\X_{SRS}).$$

\begin{example}\label{ex-exp}

Suppose $X$ has  an exponential  distribution with pdf  $f(x)=\lambda e^{-\lambda x}$,  $x>0$, where  $\lambda>0$ is the unknown parameter of interest. We consider the case  where $n=2$. For an imperfect RSS of size $n=2$, we use the ranking error probability matrix 
$$P =\left[ \begin{matrix} p_{1,1}&  p_{1, 2}\\ p_{2, 1} &p_{2,2}  \end{matrix}\right].$$
 Using  \eqref{imperfect-pdf},  we have  $f_{[i]}(x)=2\lambda e^{- \lambda x} \left[(p_{i,1}-p_{i,2}) e^{- \lambda x}+ p_{i,2}\right]$, $i=1, 2$. 
Straightforward calculations show that  ${\rm H}({\bf X}_{SRS})=2-2\log\lambda$,  ${\rm H}({\bf X}_{RSS})=3- 2\log (2\lambda)$, and  
\begin{align*}
{\rm H}(X^*_{RSS})&=2- 2\log (2\lambda)+ (p_{2,2}-p_{1,1})+ \eta(p_{1,1})+\eta(p_{2,2}),  
\end{align*} 
where
\begin{eqnarray*}
\eta (a)=\frac{2}{1-2a} \int_{a}^{1-a} u \log u \ du
=\frac{1}{2}+\frac{1}{1-2a}\left[ (1-a)^2\log(1-a)-a^2\log a\right],
\end{eqnarray*}
 with  $0<a<1$. It is easy to show that 
\begin{align*}
{\rm H}({\bf X}_{RSS})- {\rm H}({\bf X}_{SRS})  &= 1-2\log 2\approx -0.3863<0,\\
 {\rm H}({\bf X}^*_{RSS})- {\rm H}({\bf X}_{SRS})&=\eta (p_{1,2})+\eta (1-p_{1,2})=2\eta (p_{1,2})-2 \log 2<0,\\
{\rm H}({\bf X}_{RSS})-{\rm H}({\bf X}^*_{RSS})&=1-\eta (p_{1,2})-\eta (1-p_{1,2})=1-2\eta (p_{1,2})<0.
\end{align*} 
Figure \ref{fig-infoimprovement} shows the differences between ${\rm H}(\X^*_{RSS})$ and ${\rm H}(\X_{RSS})$ with ${\rm H}(\X_{SRS})$. It also presents the effect of ranking error on the amount of the Shannon entropy of the resulting  RSS data by comparing ${\rm H}(\X^*_{RSS})$ with ${\rm H}({\bf X}_{RSS})$.
 It is observed that,  the maximum difference occurs for $p_{1,2}=0.5$.

\begin{figure}[h] 
    \centering
    \includegraphics[width=4.5in, height=3.5in]{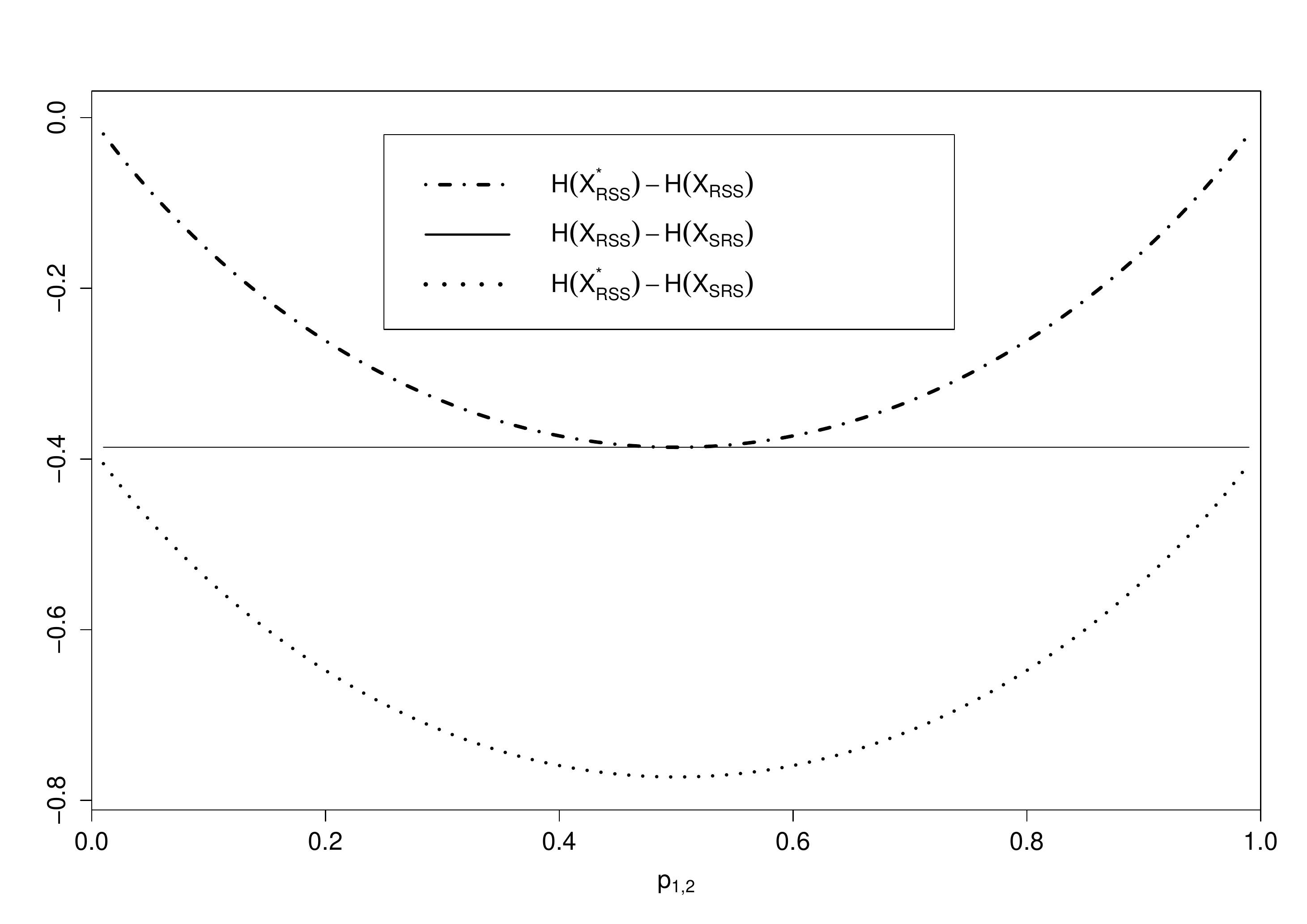}
        \caption{\footnotesize{Computed values of the difference between the Shannon entropies of ${\bf X}_{RSS}$ and ${\bf X}^*_{RSS}$ compared with   that of ${\bf X}_{SRS}$ of the same size as a function of the ranking error probability $p_{1, 2}$}.}\label{fig-infoimprovement}
    \end{figure}
    \end{example}

\section{ R\'enyi Information of Ranked Set Samples}\label{sec-renyi}
A more general measure of entropy with the same meaning and similar properties as that of Shannon entropy has been defined by R\'enyi (1961) as follows
\begin{align}
\label{Reni}
{\rm H}_{\alpha}(X)
= \frac{1}{1-\alpha} \log \int f^{\alpha}(x) d\nu(x)
= \frac{1}{1-\alpha} \log \EE\left[ f^{\alpha-1}(X)\right],
\end{align}
where $ \alpha>0$, $\alpha \neq 1$ and $d\nu(x)=dx$ for the continuous and $d\nu(x)=1$ for discrete cases. It is well known that 
$$\lim_{\alpha\to 1}{\rm H}_{\alpha}(X)= -\int f(x) \log f(x) dx={\rm  H}(X).$$
R\'enyi information  is much more flexible than the Shannon entropy  due to
the parameter $\alpha$.  It is an  important  measure   in  various applied sciences such as statistics, ecology,  engineering, economics, etc.  
In this section,  we obtain the R\'enyi information of $\X_{RSS}$ and $\X^*_{RSS}$ and compare them with the R\'enyi information of $\X_{SRS}$. To this end, from \eqref{Reni},  it is easy to show that the R\'enyi information of  a SRS of size $n$ from $f$  is given by 
\begin{equation}
\label{Rsrs1}
{\rm H}_{\alpha}(\X_{SRS})= \sum_{i=1}^n {\rm H}_{\alpha}(X_i) = n {\rm H}_{\alpha}(X_1).
\end{equation}
Also, for a RSS  of  size $n$,  we have
\begin{equation}
\label{Rrss}
 {\rm H}_{\alpha}(\X_{RSS})= \sum_{i=1}^n {\rm H}_{\alpha}(X_{(i)}).
\end{equation}
To compare ${\rm H}_{\alpha}(\X_{SRS})$ with  ${\rm H}_{\alpha}(\X_{RSS})$ and ${\rm H}_{\alpha}(\X^*_{RSS})$, we consider two cases,
i.e.   $0<\alpha<1$ and  $\alpha>1$.  First, we find the results for $0<\alpha<1$  which  are stated in the next lemma.

\begin{lemma}\label{renyi<1}
For any $0<\alpha<1$ and all $n\in\mathbb{N}$, we have 
$${\rm H}_{\alpha}(\X_{RSS}) \leq {\rm H}_{\alpha}(\X^*_{RSS})\leq {\rm H}_{\alpha}(\X_{SRS}).$$
\end{lemma}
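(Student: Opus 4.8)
The plan is to reduce the whole statement to two applications of Jensen's inequality — one for the power function $t\mapsto t^{\alpha}$, which is \emph{concave} when $0<\alpha<1$, and one for $\log$, which is concave — together with the double stochasticity of the ranking matrix $(p_{i,r})$. Throughout, write $c_{\alpha}=\tfrac{1}{1-\alpha}$, and note that $c_{\alpha}>0$ precisely because $0<\alpha<1$; this sign is what makes the final chain come out in the stated direction. Introduce the abbreviations $a_r=\int f_{(r)}^{\alpha}(x)\,dx$, $b_i=\int f_{[i]}^{\alpha}(x)\,dx$ and $a=\int f^{\alpha}(x)\,dx$, so that by \eqref{Rrss} and \eqref{Rsrs1} one has ${\rm H}_{\alpha}(\X_{RSS})=c_{\alpha}\sum_{i=1}^n\log a_i$, ${\rm H}_{\alpha}(\X^*_{RSS})=c_{\alpha}\sum_{i=1}^n\log b_i$, and ${\rm H}_{\alpha}(\X_{SRS})=n\,c_{\alpha}\log a$. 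Since $c_{\alpha}>0$, both desired inequalities reduce to the corresponding inequalities for the sums of logarithms, and the argument then parallels the Shannon-case lemmas of Section~\ref{sec-shannon}.

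For the left inequality ${\rm H}_{\alpha}(\X_{RSS})\le{\rm H}_{\alpha}(\X^*_{RSS})$: starting from \eqref{imperfect-pdf} and applying the concavity of $t\mapsto t^{\alpha}$ with the weights $p_{i,r}$ (which sum to one over $r$), one gets the pointwise bound $f_{[i]}^{\alpha}(x)\ge\sum_{r=1}^n p_{i,r}f_{(r)}^{\alpha}(x)$, hence $b_i\ge\sum_{r=1}^n p_{i,r}a_r$ after integrating. Applying the increasing, concave logarithm and Jensen once more, $\log b_i\ge\log\!\bigl(\sum_{r}p_{i,r}a_r\bigr)\ge\sum_{r}p_{i,r}\log a_r$. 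Summing over $i$ and using the column-stochasticity $\sum_{i=1}^n p_{i,r}=1$ collapses the right-hand side to $\sum_{r}\log a_r$, so $\sum_i\log b_i\ge\sum_i\log a_i$; multiplying by $c_{\alpha}>0$ finishes this half.

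For the right inequality ${\rm H}_{\alpha}(\X^*_{RSS})\le{\rm H}_{\alpha}(\X_{SRS})$: first record that $f(x)=\tfrac1n\sum_{i=1}^n f_{[i]}(x)$, which follows from $\tfrac1n\sum_i f_{[i]}(x)=\tfrac1n\sum_r\bigl(\sum_i p_{i,r}\bigr)f_{(r)}(x)=\tfrac1n\sum_r f_{(r)}(x)=f(x)$, using $\sum_i p_{i,r}=1$ and then $f=\tfrac1n\sum_r f_{(r)}$. Concavity of $t\mapsto t^{\alpha}$ applied with uniform weights gives $f^{\alpha}(x)\ge\tfrac1n\sum_i f_{[i]}^{\alpha}(x)$ pointwise, so $a\ge\tfrac1n\sum_i b_i$ after integrating. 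By concavity of $\log$, $\tfrac1n\sum_i\log b_i\le\log\!\bigl(\tfrac1n\sum_i b_i\bigr)\le\log a$, i.e.\ $\sum_i\log b_i\le n\log a$; multiplying by $c_{\alpha}>0$ completes the proof.

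The only genuinely delicate point is the bookkeeping of inequality directions: $t\mapsto t^{\alpha}$ is concave only in the regime $0<\alpha<1$, and the final multiplication by $c_{\alpha}$ preserves the direction only because $c_{\alpha}>0$ there — this is why the complementary case $\alpha>1$, treated next, reverses both effects and must be argued separately. Beyond that, one should be careful to invoke column-stochasticity $\sum_i p_{i,r}=1$ in the left inequality, and both that identity and $f=\tfrac1n\sum_r f_{(r)}$ to obtain $f=\tfrac1n\sum_i f_{[i]}$ in the right inequality; the remaining pointwise Jensen steps are routine.
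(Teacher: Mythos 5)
Your proof is correct and follows essentially the same route as the paper's: both halves rest on the concavity of $t\mapsto t^{\alpha}$ for $0<\alpha<1$ and of $\log$, combined with the double stochasticity of $(p_{i,r})$ and the identity $f=\tfrac1n\sum_i f_{[i]}$, with the positivity of $\tfrac{1}{1-\alpha}$ preserving the inequality directions. Your write-up is, if anything, slightly more explicit than the paper's in deriving $f=\tfrac1n\sum_i f_{[i]}$ from column-stochasticity and in tracking the sign of the prefactor, but the argument is the same.
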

\begin{proof}
We first show that for any $0<\alpha<1$,  ${\rm H}_{\alpha}(\X_{RSS}) \leq {\rm H}_{\alpha}(\X^*_{RSS})$. To this end, using 
\begin{align}
\label{imperfect-1} 
{\rm H}_{\alpha}(\X^*_{RSS})=\frac{1}{1-\alpha} \sum_{i=1}^n \log \int \left( \sum_{j=1}^n p_{i,j}f_{(j)}(x)\right)^{\alpha} dx,
\end{align}  
and   concavity of  $h_1(t)=t^{\alpha}$,  for $0<\alpha<1$, $t>0$,   we have
\begin{eqnarray*}
{\rm H}_{\alpha}(\X^*_{RSS})&\geq & \frac{1}{1-\alpha} \sum_{i=1}^n \log \int \sum_{j=1}^n p_{i,j}  f_{(j)}^{\alpha}(x)\,  dx\\
&\geq &  \frac{1}{1-\alpha} \sum_{i=1}^n  \sum_{j=1}^n p_{i,j} \log \int f_{(j)}^{\alpha}(x)\,  dx\\
&=& \frac{1}{1-\alpha}   \sum_{j=1}^n  \log \int  f_{(j)}^{\alpha}(x)\, dx
=  {\rm H}_{\alpha}(\X_{RSS}),
\end{eqnarray*}
where the second inequality is obtained by using  the concavity of   $h_2(t)=\log t$, for $t>0$. This,  with  \eqref{Rsrs1},  shows the result. To complete the proof  we show that ${\rm H}_{\alpha}(\X^*_{RSS}) \leq {\rm H}_{\alpha}(\X_{SRS})$ for any $0<\alpha<1$ and all $n\in\mathbb{N}$. 
To this end, from  \eqref{imperfect-1},  and using $f(x)= \frac{1}{n}\sum_{i=1}^n f_{[i]}(x)$,  we have 
\begin{eqnarray*}
\label{imperfect-2} \nonumber
{\rm H}_{\alpha}(\X^*_{RSS})&=& \frac{1}{1-\alpha} \sum_{i=1}^n \log \int f_{[i]}^{\alpha}(x)\,  dx\\  \nonumber
&\leq & \frac{n}{1-\alpha} \log  \sum_{i=1}^n \frac{1}{n} \int f_{[i]}^{\alpha}(x)\, dx\\  \nonumber
&\leq & \frac{n}{1-\alpha} \log \int  \left( \frac{1}{n}\sum_{i=1}^n  f_{[i]}(x)\right)^{\alpha} dx\\  \nonumber
&=&  \frac{n}{1-\alpha} \log \int    f^{\alpha}(x) dx
= {\rm H}(\X_{SRS}).
\end{eqnarray*}  
\end{proof}

\noindent 
In Lemma \ref{renyi<1}\textcolor[rgb]{1,0,0}{,} we were able to show analytically an ordering relationship among the R\'enyi information of $\X^*_{RSS}$, $\X_{RSS}$ and $\X_{SRS}$ when $0<\alpha<1$. It would naturally be of interest to extend such a relationship to the case where $\alpha>1$. 
 It appears  that similar relationship as in Lemma \ref{renyi<1} holds when $\alpha>1$.  However, we have not analytical  proof here.

\begin{conjecture} For any $\alpha>1$ and all $n\in\mathbb{N}$, we have 
${\rm H}_{\alpha}(\X_{RSS}) \leq {\rm H}_{\alpha}(\X^*_{RSS})\leq {\rm H}_{\alpha}(\X_{SRS}).$

\end{conjecture}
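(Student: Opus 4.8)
The natural strategy is to replay the proof of Lemma~\ref{renyi<1} with the convexity directions adjusted to the range $\alpha>1$. Put
\[
c_r=\int f_{(r)}^{\alpha}(x)\,dx,\qquad a_i=\int f_{[i]}^{\alpha}(x)\,dx,\qquad a_0=\int f^{\alpha}(x)\,dx,
\]
so that ${\rm H}_{\alpha}(\X_{RSS})=\tfrac{1}{1-\alpha}\sum_{r}\log c_r$, ${\rm H}_{\alpha}(\X^*_{RSS})=\tfrac{1}{1-\alpha}\sum_{i}\log a_i$ and ${\rm H}_{\alpha}(\X_{SRS})=\tfrac{n}{1-\alpha}\log a_0$. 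Since $\tfrac{1}{1-\alpha}<0$ when $\alpha>1$, the two inequalities in the conjecture are equivalent to the product inequalities
\[
\prod_{r=1}^{n} c_r\ \ge\ \prod_{i=1}^{n} a_i\qquad\text{and}\qquad \prod_{i=1}^{n} a_i\ \ge\ a_0^{\,n}.
\]
Using the substitution $x=F^{-1}(u)$ together with $f_{(r)}(F^{-1}(u))=\beta_r(u)f(F^{-1}(u))$, where $\beta_r$ denotes the ${\rm Beta}(r,n-r+1)$ density and $\tfrac1n\sum_{r}\beta_r(u)\equiv 1$, one obtains $c_r=\int_0^1\beta_r(u)^{\alpha}w(u)\,du$, $a_i=\int_0^1\big(\sum_r p_{i,r}\beta_r(u)\big)^{\alpha}w(u)\,du$ and $a_0=\int_0^1 w(u)\,du$, with the common nonnegative weight $w(u)=f(F^{-1}(u))^{\alpha-1}$ and, as before, $\tfrac1n\sum_i\sum_r p_{i,r}\beta_r(u)\equiv1$.

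The plan would be to prove these two product inequalities by Jensen-type arguments: for the second, $f=\tfrac1n\sum_i f_{[i]}$ and convexity of $t\mapsto t^{\alpha}$ give $a_0\le\tfrac1n\sum_i a_i$; for the first, Minkowski's inequality gives $\|f_{[i]}\|_{\alpha}\le\sum_r p_{i,r}\|f_{(r)}\|_{\alpha}$, after which one would like to finish with a doubly-stochastic/majorization step on $(\|f_{(r)}\|_{\alpha})_r$. The difficulty — and this is the main obstacle — is that for $\alpha>1$ all of these steps reverse relative to Lemma~\ref{renyi<1}: the sign of $\tfrac{1}{1-\alpha}$ flips and the convexity of $t^{\alpha}$ flips, but $\log$ remains concave, so the estimates $a_0\le\tfrac1n\sum a_i$ and $\prod a_i\le(\tfrac1n\sum a_i)^{n}$ pull in the same direction and do not combine, and similarly the Hardy--Littlewood--P\'olya step yields $\prod_i\big(\sum_r p_{i,r}\|f_{(r)}\|_\alpha\big)\ge\prod_r\|f_{(r)}\|_\alpha$, the opposite of what is needed. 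Intuitively, for $\alpha>1$ the functional $g\mapsto\int g^{\alpha}$ rewards concentration of $g$, whereas passing from $f_{(r)}$ to $f_{[i]}$ and from $f_{[i]}$ to $f$ only mixes (de-concentrates) the densities; so, in contrast with the Shannon and $0<\alpha<1$ cases, the comparison is no longer distribution-free and cannot be settled by convexity alone.

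The line of attack we would actually pursue is interpolation in $\alpha$. Set $G(\alpha)=\sum_{r}\log c_r-n\log a_0$, so that ${\rm H}_{\alpha}(\X_{RSS})-{\rm H}_{\alpha}(\X_{SRS})=G(\alpha)/(1-\alpha)$, and observe that $c_r=a_0=1$ at $\alpha=1$, hence $G(1)=0$, while $G'(1)=-\big[{\rm H}(\X_{RSS})-{\rm H}(\X_{SRS})\big]=-\sum_i {\rm H}(U_{(i)})\ge0$ by the Shannon analysis of Section~\ref{sec-shannon}. One would then try to show that $G$ is convex on $(1,\infty)$ — or at least that $G\ge0$ there — by differentiating under the integral sign in the quantile representation above; the analogous function for the ${\rm H}_{\alpha}(\X_{RSS})$ versus ${\rm H}_{\alpha}(\X^*_{RSS})$ comparison would be treated the same way. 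The hard step is to control the sign of $G$ (or of its second derivative) uniformly over all parent distributions $f$: the weight $w$ can be made to concentrate near an interior value $u_0\in(0,1)$ by letting $f$ have a sharp spike at $F^{-1}(u_0)$, and one must then check that the profile factors $\beta_r(u_0)$ still cooperate. If they fail to do so in full generality, one would fall back on a regularity hypothesis on $f$ — for instance a monotone or log-concave density, which rules out such spikes — in the spirit of the remark of Frey (2013) that the perfect/imperfect ordering can be delicate for badly behaved parents.
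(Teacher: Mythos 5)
The statement you are addressing is not proved in the paper: the authors explicitly write that they ``have not analytical proof here'' and offer only numerical support (the exponential example of Example~\ref{ex-exp-cont} and Figure~\ref{fig-renyi}). So there is no paper proof to match your argument against, and your proposal, by its own admission, is also not a proof. What you have written is a correct diagnosis of why the method of Lemma~\ref{renyi<1} cannot be transplanted to $\alpha>1$ --- the sign of $\tfrac{1}{1-\alpha}$ and the convexity of $t\mapsto t^{\alpha}$ both reverse while the concavity of $\log$ does not, so the Jensen steps no longer chain, and the Minkowski/AM--GM route gives $\prod_i\bigl(\sum_r p_{i,r}\|f_{(r)}\|_{\alpha}\bigr)\geq\prod_r\|f_{(r)}\|_{\alpha}$, the wrong direction. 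This is sound and is, in effect, the reason the authors had to downgrade the claim to a conjecture.

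The gap in your interpolation plan is concrete: knowing $G(1)=0$ and $G'(1)=-\sum_i{\rm H}(U_{(i)})\geq 0$ only controls the sign of $G$ in a right neighbourhood of $\alpha=1$, and that neighbourhood may depend on the parent $f$; to conclude ${\rm H}_{\alpha}(\X_{RSS})\leq{\rm H}_{\alpha}(\X_{SRS})$ for \emph{all} $\alpha>1$ you would need $G\geq 0$ on all of $(1,\infty)$ uniformly over $f$, and you give no mechanism for that --- convexity of $G$ is asserted as a hope, not established, and your own spike construction for the weight $w(u)=f(F^{-1}(u))^{\alpha-1}$ is precisely the kind of example one would use to look for a counterexample rather than a proof. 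If you pursue this, the honest deliverables are either (i) a proof under an explicit regularity hypothesis on $f$ (e.g.\ log-concavity), clearly flagged as weaker than the conjecture, or (ii) a counterexample for a badly behaved $f$, which would be at least as valuable. As written, the proposal should be presented as a discussion of obstructions, not as progress toward a proof of the stated conjecture.
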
 

In Example \ref{ex-exp-cont} we compare the R\'enyi information of $\X^*_{RSS}$, $\X_{RSS}$
 and $\X_{SRS}$  as a function of $\alpha$ in the case of  an exponential distribution. The results are presented in Figure \ref{fig-renyi}, which do support Conjecture 1.

\begin{example}
\label{ex-exp-cont}
 Suppose the assumptions of   Example \ref{ex-exp} hold, then the R\'enyi information of a SRS of size $n=2$ is given by
$${\rm H}_{\alpha}(\X_{SRS})= - 2\log \lambda - \frac{2}{1-\alpha}\log \alpha, \quad \alpha\neq 1. $$
Straightforward calculations show that 
\begin{align*}
{\rm H}_{\alpha}(X_{(1)})
&= -\log \lambda - \log 2 -\frac{1}{1-\alpha}\log \alpha,\\
{\rm H}_{\alpha}(X_{(2)}) &= -\log\lambda + \frac{\alpha}{1-\alpha}\log 2 +\frac{1}{1-\alpha} \log\left\{ \frac{\Gamma(\alpha+1) \Gamma(\alpha)}{\Gamma(2\alpha+1)}\right\},
\end{align*}
and so the R\'enyi information of $\X_{RSS}$ is given by ${\rm H}_{\alpha}(\X_{RSS})= {\rm H}_{\alpha}(X_{(1)})+ {\rm H}_{\alpha}(X_{(2)})$. Now, 
$${\rm H}_{\alpha}(\X_{RSS})-{\rm H}_{\alpha}(\X_{SRS}) = \frac{\alpha}{1-\alpha} (1-\log 2) + \frac{1}{1-\alpha} \log\left\{ \frac{\Gamma(\alpha+1) \Gamma(\alpha)}{\Gamma(2\alpha+1)}\right\}. $$
To obtain ${\rm H}_{\alpha}({\X^*_{RSS}})$, let 
$$U_{i, \lambda}(x, t)= a_i(t) e^{-\lambda\, x} + b_i(t), \quad i=1, 2 ,$$
where 
$$a_i(t)= (-1)^i (1-2t) \quad\mbox{and}\quad b_i(t)= t^{(1-i)} (1-t)^{(2-i)},$$
and $p_{1,1}= P(X_{(1)}= X_{[1]})$ is defined in Example  \ref{ex-exp}.  Now, the R\'enyi information of $\X^*_{RSS}$ is obtained as follows 
$$ {\rm H}_{\alpha}(\X^*_{RSS})= \frac{\alpha}{1-\alpha}\log 2\lambda + \frac{1}{1-\alpha}\sum_{i=1}^2\log \int_0^{\infty} e^{-\alpha \lambda\, x} U^{\alpha}_{i, \lambda}(x, p_{1,1})\,  dx,\quad \alpha\neq 1, $$
which can be calculated numerically.  Figure \ref{fig-renyi}(a)  shows the values of $H_{\alpha}(\X^*_{RSS}) - {\rm H}_{\alpha}(\X_{SRS})$  as a function of $\alpha$ for  $p_{1, 1}\in\{ 0.8, 0.9, 0.95, 1\}$. When $p_{1, 1}=1$, $H_{\alpha}(\X^*_{RSS}) - {\rm H}_{\alpha}(\X_{SRS})=H_{\alpha}(\X_{RSS}) - {\rm H}_{\alpha}(\X_{SRS}). $ In   Figure \ref{fig-renyi}(b)  we show the effect of the  ranking error on the R\'enyi information of $\X_{RSS}$ by comparing ${\rm H}_{\alpha}(\X^*_{RSS})$ and ${\rm H}_{\alpha}(\X_{RSS})$ as functions of $\alpha$ for different values of $p_{1, 1}$.
\begin{figure}[h!] 
    \centering
    \includegraphics[width=7.2in, height=3in]{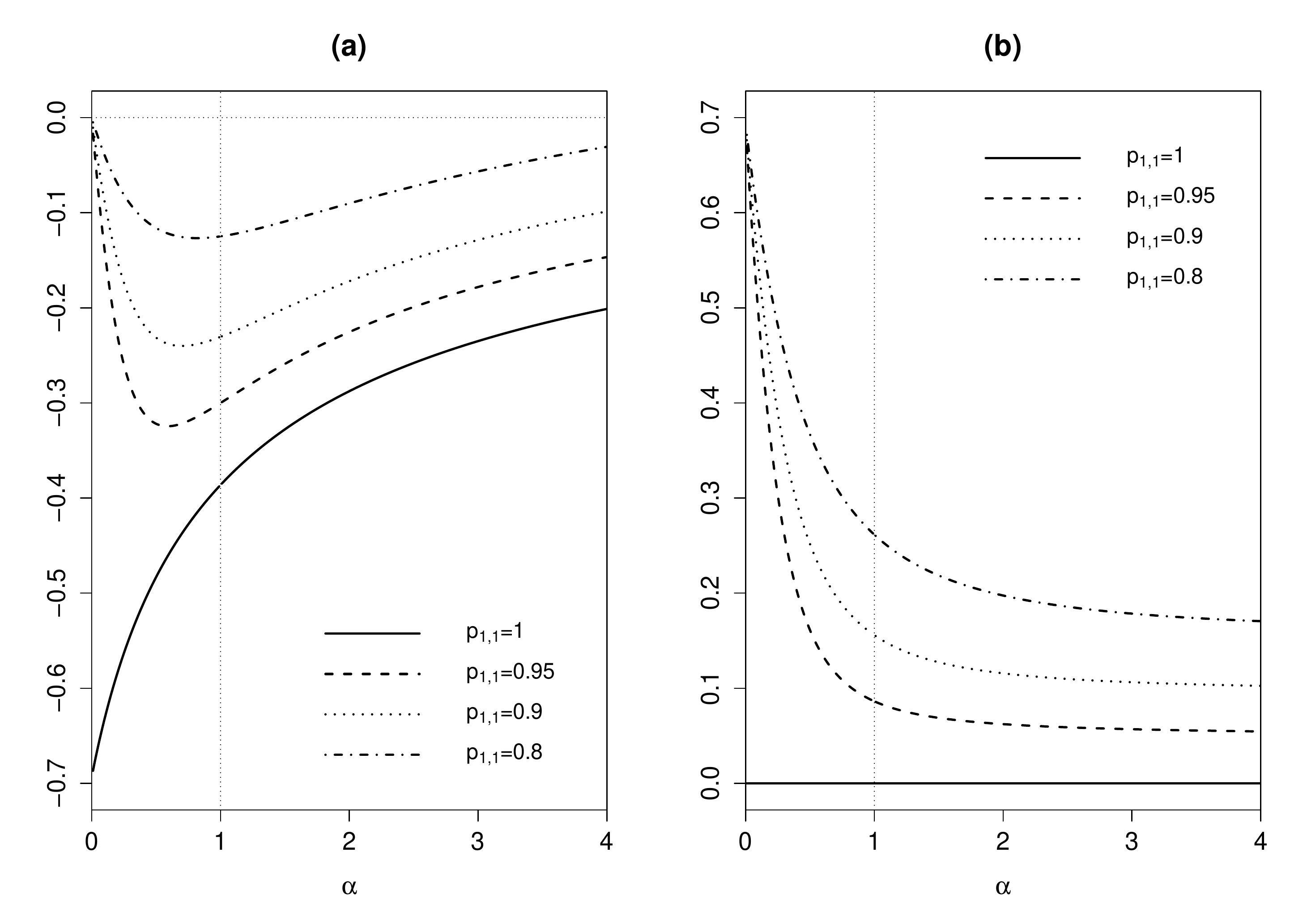}
        \caption{\footnotesize{Comparison of  the R\'enyi information  of ${\bf X}_{RSS}$ and ${\bf X}^*_{RSS}$  with   that of ${\bf X}_{SRS}$  as a function of $\alpha$.  The value of  $H_{\alpha}(\X^*_{RSS}) - {\rm H}_{\alpha}(\X_{SRS})$ are presented in (a) while $H_{\alpha}(\X^*_{RSS}) - {\rm H}_{\alpha}(\X_{RSS})$ are given in (b).   }}\label{fig-renyi}
    \end{figure}

\end{example}
 
  Note that for  $\alpha>1$   the difference between the  R\'enyi information of ${\bf X}_{RSS}$ with  its counterpart under SRS can be written as follows
\begin{align}
\label{a>1-2} \nonumber
{\rm H}_{\alpha}(\X_{RSS}) - {\rm H}_{\alpha}(\X_{SRS})
&=   \frac{1}{1-\alpha} \sum_{i=1}^n \log \int f^{\alpha}_{(i)}(x) dx - \frac{n}{1-\alpha} \log \int  f^{\alpha} (x)dx\\ \nonumber
&=  \frac{1}{1-\alpha} \sum_{i=1}^n \log\left ( \frac{\int f^{\alpha}_{(i)}(x) dx}{ \int  f^{\alpha} (x)dx} \right)\\
&= \frac{\alpha}{1-\alpha} n \log n+    \frac{1}{1-\alpha} \sum_{i=1}^n \log \EE\left[ \{\PP_{F(W)}( T= i-1) \}^{\alpha} \right],
\end{align}
where $T|W=w\sim Bin(n-1, F(w))$ and $W$ has a density proportional to $f^{\alpha}(w)$, i.e. $g(w) = \frac{f^{\alpha}(w)}{\int f^{\alpha}(w)dw}$.   
Since  $\PP_{F(w)}( T= i-1) \leq 1$ for all $i=1, \ldots, n-1$ and fixed $w$, we have 
$$ \log \EE\left[ \{\PP_{F(W)}( T= i-1) \}^{\alpha}\right] \leq 0,$$ for all $\alpha>1$. This results in  a lower bound for the difference between the R\'enyi information of $\X_{RSS}$ and $\X_{SRS}$ as  ${\rm H}_{\alpha}(\X_{RSS}) - {\rm H}_{\alpha}(\X_{SRS}) \geq \frac{\alpha}{1-\alpha} n \log n.$
In the following result, we  find a sharper lower bound for ${\rm H}_{\alpha}(\X_{RSS}) - {\rm H}_{\alpha}(\X_{SRS})$  when $\alpha>1$.
 
\begin{lemma}
For any $\alpha>1$ and all $n\geq 2$,  we have 
${\rm H}_{\alpha}(\X_{RSS}) -{\rm H}_{\alpha}(\X_{SRS}) \geq  \Psi(\alpha, n)$, with 
$$\Psi(\alpha, n)=  \frac{\alpha}{1-\alpha}\sum_{i=1}^n \log \left\{ n \binom{n-1}{i-1}\left (\frac{i-1}{n-1}\right)^{i-1} \left(\frac{n-i}{n-1}\right)^{n-i} \right\}, $$
where $ \Psi(\alpha, n)\in\left[\frac{n\alpha}{1-\alpha}\log n, 0  \right).$
\end{lemma}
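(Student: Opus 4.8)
The plan is to start from the representation \eqref{a>1-2} already established, namely
\[
{\rm H}_{\alpha}(\X_{RSS}) - {\rm H}_{\alpha}(\X_{SRS}) = \frac{\alpha}{1-\alpha}\, n\log n + \frac{1}{1-\alpha}\sum_{i=1}^n \log \EE\!\left[\{\PP_{F(W)}(T=i-1)\}^{\alpha}\right],
\]
with $T\mid W=w\sim Bin(n-1,F(w))$ and $W$ having density $g\propto f^{\alpha}$, and then to bound the binomial probability $\PP_{p}(T=i-1)=\binom{n-1}{i-1}p^{i-1}(1-p)^{n-i}$ uniformly in $p\in[0,1]$. Setting the derivative of $\log\{p^{i-1}(1-p)^{n-i}\}$ to zero shows this function is maximized at $p=(i-1)/(n-1)$ (with the usual convention $0^{0}=1$, which also covers the boundary indices $i=1$ and $i=n$), with maximal value $c_{i,n}:=\binom{n-1}{i-1}\bigl(\tfrac{i-1}{n-1}\bigr)^{i-1}\bigl(\tfrac{n-i}{n-1}\bigr)^{n-i}$. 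Hence $\PP_{F(w)}(T=i-1)\le c_{i,n}$ for every $w$, and since $t\mapsto t^{\alpha}$ is nondecreasing on $[0,\infty)$, $\{\PP_{F(W)}(T=i-1)\}^{\alpha}\le c_{i,n}^{\alpha}$ pointwise; taking expectation over $W$ and then logarithms gives $\log\EE[\{\PP_{F(W)}(T=i-1)\}^{\alpha}]\le \alpha\log c_{i,n}$.

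Next I would substitute this into the displayed identity, being careful with signs. Because $\alpha>1$, the factor $1/(1-\alpha)$ is negative, so multiplying the last inequality by it reverses it: $\frac{1}{1-\alpha}\log\EE[\cdots]\ge \frac{\alpha}{1-\alpha}\log c_{i,n}$. Summing over $i=1,\dots,n$ and adding $\frac{\alpha}{1-\alpha}n\log n$ to both sides yields
\[
{\rm H}_{\alpha}(\X_{RSS}) - {\rm H}_{\alpha}(\X_{SRS}) \ \ge\ \frac{\alpha}{1-\alpha}\Bigl(n\log n + \sum_{i=1}^n \log c_{i,n}\Bigr) \ =\ \frac{\alpha}{1-\alpha}\sum_{i=1}^n \log(n\,c_{i,n}) \ =\ \Psi(\alpha,n),
\]
which is exactly the claimed bound.

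To locate $\Psi(\alpha,n)$, note first that $c_{i,n}=\max_{p}\PP_{p}(T=i-1)\le 1$, so $\log(n\,c_{i,n})\le\log n$; multiplying by the negative number $\frac{\alpha}{1-\alpha}$ reverses this to $\frac{\alpha}{1-\alpha}\log(n\,c_{i,n})\ge \frac{\alpha}{1-\alpha}\log n$, and summing over $i$ gives $\Psi(\alpha,n)\ge \frac{n\alpha}{1-\alpha}\log n$. For the strict upper bound I would use the Beta-integral identity $\int_0^1\binom{n-1}{i-1}p^{i-1}(1-p)^{n-i}\,dp=\binom{n-1}{i-1}B(i,n-i+1)=\tfrac1n$: since the maximum of a function over the unit interval is at least its integral there, $c_{i,n}\ge 1/n$, i.e.\ $n\,c_{i,n}\ge 1$ for all $i$, and in addition $c_{1,n}=1>1/n$ for $n\ge 2$; therefore $\sum_{i=1}^n\log(n\,c_{i,n})\ge \log(n\,c_{1,n})=\log n>0$, whence $\Psi(\alpha,n)=\frac{\alpha}{1-\alpha}\sum_{i=1}^n\log(n\,c_{i,n})\le \frac{\alpha}{1-\alpha}\log n<0$. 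The only mildly delicate points are verifying that the first-order condition indeed identifies the global maximum of $p^{i-1}(1-p)^{n-i}$ on $[0,1]$ (handled by the boundary analysis at $i=1$ and $i=n$) and spotting the Beta-integral trick that yields $c_{i,n}\ge 1/n$ without estimating binomial coefficients; the rest is bookkeeping with the sign of $1/(1-\alpha)$.
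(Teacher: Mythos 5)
Your proof is correct and is essentially the paper's own argument in different clothing: the paper bounds the $\mathrm{Beta}(i,n-i+1)$ density by its value at the mode $(i-1)/(n-1)$ inside the integral $\int_0^1\{f^*_{i,n-i+1}(u)\}^{\alpha}f^{\alpha-1}(F^{-1}(u))\,du$, which is exactly your bound $\PP_{F(w)}(T=i-1)\le c_{i,n}$ since $f^*_{i,n-i+1}(u)=n\,\PP_u(T=i-1)$, and the sign bookkeeping with $1/(1-\alpha)<0$ is handled the same way. The one place you genuinely diverge is in showing $\Psi(\alpha,n)<0$: the paper asserts (without details) that $\Psi(\alpha,n+1)\le\Psi(\alpha,n)$ with base case $n=2$, whereas your observation that $c_{i,n}\ge\int_0^1\binom{n-1}{i-1}p^{i-1}(1-p)^{n-i}\,dp=1/n$ together with $c_{1,n}=1$ gives a cleaner, fully self-contained justification of both endpoints of the stated interval.
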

\begin{proof}
Using \eqref{Rrss}, the pdf of $X_{(i)}$  and the transformation $F(X)=U$,  we have 
\begin{eqnarray*}
{\rm H}_{\alpha}(\X_{RSS})
 &=&\frac{1}{1-\alpha} \sum_{i=1}^n \log \int_0^1 \left\{f^*_{i, n-i+1}(u)\right\}^{\alpha} f^{\alpha-1}(F^{-1}(u)) du,
\end{eqnarray*}
where $f^*_{i, n-i+1}(u)$ is the pdf of a $Beta(i, n-i+1)$ random variable with its mode at $u^*=\frac{i-1}{n-1}$. Now, since $f^*_{i, n-i+1}(u) \leq f^{*}_{i, n-i+1}(\frac{i-1}{n-1})$,  we have
  \begin{eqnarray*}
  {\rm H}_{\alpha}(\X_{RSS})&\geq  &  \frac{\alpha}{1-\alpha}\sum_{i=1}^n \log \left\{ n \binom{n-1}{i-1} (\frac{i-1}{n-1})^{i-1} (\frac{n-i}{n-1})^{n-i} \right\} +  \frac{n}{1-\alpha} \log\int_0^1 f^{\alpha-1}(F^{-1}(u)) du 
  \\&=& \Psi(\alpha, n) +  {\rm H}({\X_{SRS}}),
  \end{eqnarray*}
  where   
  \begin{eqnarray}
  \Psi(\alpha, n)=  \frac{\alpha}{1-\alpha}\sum_{i=1}^n \log \left\{ n \binom{n-1}{i-1} (\frac{i-1}{n-1})^{i-1} (\frac{n-i}{n-1})^{n-i} \right\}.
  \end{eqnarray}
  It is easy  to show that for all $n\in\mathbb{N}$ and $\alpha>1$,  $\Psi(\alpha, n)<0$. To do this,  one can easily check that $\Psi(\alpha, n+1)\leq \Psi(\alpha, n)$ for all $n\geq 2$  with $\Psi(\alpha, 2)= \frac{2\alpha}{1-\alpha}<0$. Also, since $\binom{n-1}{i-1} (\frac{i-1}{n-1})^{i-1} (\frac{n-i}{n-1})^{n-i}\leq 1$ for all $i=1, \ldots, n$, we have 
  $\Psi(\alpha, n) \geq \frac{\alpha}{1-\alpha} \sum_{i=1}^n \log n=\frac{n\alpha}{1-\alpha}\log n. $ 
\end{proof}

\section{Kullback-Leibler  Information of Ranked Set Samples}\label{sec-kl}

In 1951 Kullback and Leiber introduced  a measure of information from the statistical  point of view by  comparing  two probability distributions associated with the same experiment. The Kullback-Leibler (KL) divergence is a measure of how different two probability distributions (over the same sample space) are.  The KL divergence for two random variables $X$ and $Y$ with  cdfs $F$ and $G$ and  pdfs $f$ and $g$, respectively, is given by
\begin{align}
\label{KL-1}
\K(X, Y)=\int f(t) \log \left(\frac{f(t)}{g(t)}\right) dt.
\end{align}
Using the same idea, we define the KL discrimination information between  $\X_{RSS}$ and $\X_{SRS}$ as follows:
\begin{align*}
\label{KL}
\K(\X_{SRS}, \X_{RSS})=\int_{\mathcal{X}^n} f(\x_{SRS}) \log\left(\frac{f(\x_{SRS})}{f(\x_{RSS})}\right) d\x_{SRS}.
\end{align*}
It is easy  to see that
\begin{eqnarray}
\label{KL-3}
\K(\X_{SRS}, \X_{RSS})= \sum_{i=1}^n \int f(x) \log\left( \frac{f(x) }{ f_{(i)}(x)}\right) dx  =\sum_{i=1}^n \K( X, X_{(i)}).
\end{eqnarray}
By substituting  the pdf of $ X_{(i)}$ in \eqref{KL-3}, we find
\begin{eqnarray}
 \label{KL-4}
\K(\X_{SRS}, \X_{RSS})
&=&-  \sum_{i=1}^n \int f(x) \log\left( \frac{f(x) }{ i\binom{n}{i} f(x) F^{i-1}(x) \bar{F}^{n-i} (x) }\right) dx\nonumber \\
&= &-\sum_{i=1}^n \int_{0}^1 \log\left\{ i\binom{n}{i}  u^{i-1} (1-u)^{n-i}\right\} du\nonumber\\
&=&- \sum_{i=1}^n  \log i\binom{n}{i} + n(n-1)\nonumber\\
&:=& d_n.
\end{eqnarray}
Note that  $\K(\X_{SRS}, \X_{RSS})$ is distribution-free, and
 $\{ d_n, n=1, 2, \ldots\}$ is a nondecreasing sequence of non-negative real values  for all $n\in\mathbb{N}$. That is, the KL information  between
the distribution of  SRS and  the distribution of RSS of the same size  increases
as  the set size $n$ increases. \\

\begin{remark}It is well known  that  the KL divergence is  non-symmetric and can not be considered as a distance metric. In our problem, note that 
\begin{align*}
\K(\X_{RSS}, \X_{SRS})
  =\sum_{i=1}^n \K(  X_{(i)}, X)
  =\sum_{i=1}^n \K( U_{(i)}, U)
  =-\sum_{i=1}^n {\rm H}(U_{(i)})=-k(n),
\end{align*}
where  $U$ has  uniform distribution. Various measures have been introduced in the literature generalizing this measure. For example, in order to have a distance metric,   the following  symmetric Kullback-Leibler  distance (KLD)  is proposed.
\begin{equation*}
\label{KL-1}
\text{KLD}(X, Y)= \K(X, Y)+\K(Y, X).
\end{equation*}
\end{remark}
\begin{lemma}
Suppose $\X_{SRS}$ is a SRS of size $n$  from $f (x)$ and let $\Y_{RSS}$ and $\Y_{SRS}$ be independent RSS  and SRS samples of the same size from another distribution  with pdf $g(x)$, respectively.  Then, $$\K(\X_{SRS}, \Y_{SRS})\leq \K( \X_{SRS}, \Y_{RSS}). $$
\end{lemma}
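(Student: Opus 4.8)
The plan is to reuse the additive decomposition that gave \eqref{KL-3}. Writing $g_{(i)}$ for the density of the $i$th order statistic in a sample of size $n$ from $g$ (so $\Y_{RSS}$ has joint density $\prod_{i=1}^n g_{(i)}(y_i)$ when $m=1$, $k=n$) and $Y_{(i)}$ for a random variable with that density, the product form of the joint densities yields
$$\K(\X_{SRS},\Y_{RSS})=\sum_{i=1}^n\int f(x)\log\frac{f(x)}{g_{(i)}(x)}\,dx=\sum_{i=1}^n\K(X,Y_{(i)}),\qquad \K(\X_{SRS},\Y_{SRS})=n\,\K(X,Y)=\sum_{i=1}^n\int f(x)\log\frac{f(x)}{g(x)}\,dx.$$
Subtracting, the $f\log f$ contributions cancel and it remains to show $\sum_{i=1}^n\int f(x)\log\frac{g(x)}{g_{(i)}(x)}\,dx\ge 0$, i.e. (pulling the finite sum inside the integral) that $\int f(x)\bigl[\sum_{i=1}^n\log\frac{g(x)}{g_{(i)}(x)}\bigr]dx\ge 0$.

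I would then argue that the bracketed integrand is nonnegative pointwise. The key ingredient is the mixture identity $g(x)=\frac1n\sum_{i=1}^n g_{(i)}(x)$, the same fact used in the proof of Lemma \ref{H(XRSS)}. Applying the AM--GM inequality to this average gives $g(x)=\frac1n\sum_{i=1}^n g_{(i)}(x)\ge\bigl(\prod_{i=1}^n g_{(i)}(x)\bigr)^{1/n}$ on the support of $g$, hence $n\log g(x)\ge\sum_{i=1}^n\log g_{(i)}(x)$, that is $\sum_{i=1}^n\log\frac{g(x)}{g_{(i)}(x)}\ge 0$ for every such $x$. Since $f(x)\ge 0$, integrating this nonnegative quantity against $f$ preserves the sign, which gives $\K(\X_{SRS},\Y_{RSS})-\K(\X_{SRS},\Y_{SRS})\ge 0$; equality forces $g_{(1)}(x)=\cdots=g_{(n)}(x)$ for a.e.\ $x$, i.e.\ $n=1$.

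The argument is short, so the only care needed is bookkeeping rather than analysis: one should assume $f$ is absolutely continuous with respect to $g$ (equivalently, to each $g_{(i)}$, since all $g_{(i)}$ are strictly positive on the interior of the support of $g$ and the boundary is null) so that the divergences are finite and the term-by-term manipulations above are legitimate. I expect the main obstacle to be merely stating these support/integrability hypotheses cleanly; the substance of the proof is the one-line AM--GM bound applied to the mixture representation of $g$.
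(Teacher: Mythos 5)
Your proof is correct and is essentially the paper's own argument: the paper also reduces to the mixture identity $g(x)=\frac1n\sum_{i=1}^n g_{(i)}(x)$ and applies Jensen's inequality (convexity of $-\log t$) to the uniform average over $i$, which is exactly your AM--GM step in different clothing. The only cosmetic difference is that you cancel the $\int f\log f$ terms before invoking concavity of the logarithm, while the paper keeps the ratio $g_{(i)}/f$ inside the logarithm throughout.
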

\begin{proof} To show the result, by the use of  the fact that   $g(x) =\frac1n\sum_{i=1}^n g_{(i)}(x)$, we have
\begin{eqnarray*}
\K( \X_{SRS}, \Y_{RSS})&=& \sum_{i=1}^n \int f(x) \log \left(\frac{f(x)}{ g_{(i)}(x)} \right) dx \\
&\geq& n\int f(x) \left\{ -\log\left(\sum_{i=1}^n\frac{ g_{(i)}(x) }{n\, f(x)} \right) \right\} dx \\
&=& n \int f(x) \log \left( \frac{f(x)}{g(x)}\right) dx \\&=& \K(\X_{SRS}, \Y_{SRS}),
\end{eqnarray*}
where the inequality is due to the convexity of $h(t)=-\log t$.
\end{proof}

\vskip 2mm
\noindent Now,  let  $\X^*_{RSS}=\{ X_{[i]}, i=1, \ldots, n\} $  be   an imperfect RSS of size $n$ from $f(x)$. Then,
\begin{eqnarray*}
\K( \X_{SRS}, \X^*_{RSS}) &=& \sum_{i=1}^n \int f(x) \log \left( \frac{f(x)}{f_{[i]}(x)}\right) dx\\&=& -\sum_{i=1}^n \int f(x) \log \left(\sum_{j=1}^n p_{i,j} j \binom{n}{j} F^{(j-1)}(x) \bar{F}^{(n-j)} (x) \right) dx  \\&=& -n \sum_{i=1}^n \int_0^1 \log \left( \sum_{j=1}^n p_{i,j} j\binom{n}{j} u^{j-1} (1-u)^{n-j}\right) du.  \end{eqnarray*}

\noindent Therefore, the KL discrimination information between the distribution of $\X_{SRS}$ and $\X^*_{RSS}$
is also distribution free and it is only a function of the set size $n$ and the ranking error  probabilities     $p_{i,j}= P(X_{[i]}= X_{(j)} )$.

\noindent In the following lemma, we show  that the KL information between the distribution of a  SRS and a perfect  RSS of the same size  is greater that the one with imperfect RSS.
\begin{lemma} Suppose $\X_{SRS}$ is a SRS of size $n$  from the pdf $f (x)$ and denote  $\X^*_{RSS}$ and $\X_{RSS}$ as independent   perfect and imperfect RSS data of the same size from $f$, respectively. Then, 
$$\K(\X_{SRS}, \X_{RSS}) \geq  \K (\X_{SRS}, \X^*_{RSS}).$$
\end{lemma}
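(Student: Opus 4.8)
The plan is to reduce the claimed inequality to a single application of Jensen's inequality, in essentially the same spirit as the earlier proof that ${\rm H}(\X_{RSS})\le{\rm H}(\X^*_{RSS})$. First I would write out the difference of the two KL informations using the representations already obtained above. Since $\K(\X_{SRS},\X_{RSS})=\sum_{i=1}^n\int f(x)\log\bigl(f(x)/f_{(i)}(x)\bigr)\,dx$ and $\K(\X_{SRS},\X^*_{RSS})=\sum_{i=1}^n\int f(x)\log\bigl(f(x)/f_{[i]}(x)\bigr)\,dx$, the terms containing $\log f(x)$ cancel term by term, so that
\begin{equation*}
\K(\X_{SRS},\X_{RSS})-\K(\X_{SRS},\X^*_{RSS})=\sum_{i=1}^n\int f(x)\log\frac{f_{[i]}(x)}{f_{(i)}(x)}\,dx .
\end{equation*}
It therefore suffices to prove that the right-hand side is nonnegative.

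Next I would exploit the structure \eqref{imperfect-pdf}, namely that $f_{[i]}(x)=\sum_{r=1}^n p_{i,r}f_{(r)}(x)$ is a convex combination of the $f_{(r)}$, because $p_{i,r}\ge 0$ and $\sum_{r=1}^n p_{i,r}=1$. By concavity of $t\mapsto\log t$ (Jensen's inequality), pointwise in $x$ one has $\log f_{[i]}(x)\ge\sum_{r=1}^n p_{i,r}\log f_{(r)}(x)$. Integrating against $f$ and summing over $i$,
\begin{equation*}
\sum_{i=1}^n\int f(x)\log f_{[i]}(x)\,dx\ \ge\ \sum_{i=1}^n\sum_{r=1}^n p_{i,r}\int f(x)\log f_{(r)}(x)\,dx=\sum_{r=1}^n\Bigl(\sum_{i=1}^n p_{i,r}\Bigr)\int f(x)\log f_{(r)}(x)\,dx ,
\end{equation*}
and the column-sum condition $\sum_{i=1}^n p_{i,r}=1$ collapses the inner bracket, leaving exactly $\sum_{r=1}^n\int f(x)\log f_{(r)}(x)\,dx=\sum_{i=1}^n\int f(x)\log f_{(i)}(x)\,dx$. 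Subtracting this from the displayed inequality yields $\sum_{i=1}^n\int f(x)\log\bigl(f_{[i]}(x)/f_{(i)}(x)\bigr)\,dx\ge 0$, which by the first paragraph is precisely the assertion $\K(\X_{SRS},\X_{RSS})\ge\K(\X_{SRS},\X^*_{RSS})$.

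I do not expect a substantive obstacle here; the only points requiring a little care are bookkeeping ones. One should check that the relevant KL integrals are finite, so that the cancellation of the $\log f(x)$ terms and the interchange of the finite sum with the integral are legitimate; this holds under the standing assumption that these information integrals exist. It is also worth recording the equality case: equality in the Jensen step forces each $f_{[i]}$ to coincide (as a function) with a single $f_{(r)}$, i.e.\ the matrix $(p_{i,r})$ is a permutation matrix, which is exactly the perfect-ranking situation $f_{[i]}=f_{(i)}$ (after relabeling). This parallels the equality conditions in the Shannon-entropy ordering ${\rm H}(\X_{RSS})\le{\rm H}(\X^*_{RSS})\le{\rm H}(\X_{SRS})$.
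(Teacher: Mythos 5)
Your proposal is correct and is essentially the paper's own argument: both hinge on applying Jensen's inequality (concavity of $\log$) to the mixture $f_{[i]}=\sum_r p_{i,r}f_{(r)}$ and then collapsing the double sum via the column-sum condition $\sum_i p_{i,r}=1$; the paper merely carries this out on the uniform scale $u=F(x)$ rather than cancelling the $\log f(x)$ terms directly. Your added remark on the equality case (that $P$ must be a permutation matrix) is a harmless extra not claimed in the lemma.
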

\begin{proof} To show the result note that
\begin{eqnarray*}
 \K (\X_{SRS}, \X^*_{RSS} )
&=&  -\sum_{i=1}^n \int_0^1 \log \left( \sum_{j=1}^n p_{i,j} j\binom{n}{j} u^{j-1} (1-u)^{n-j}\right) du\\
&\leq & -\sum_{i=1}^n\sum_{j=1}^n p_{i,j} \int_0^1\log\left( j\binom{n}{j} u^{j-1} (1-u)^{n-j}\right)du\\
&=&-\sum_{j=1}^n    \int_0^1 \log\left( j\binom{n}{j} u^{j-1} (1-u)^{n-j}\right)du \\
&=&\K(\X_{SRS}, \X_{RSS}),\end{eqnarray*}
which completes the proof.
\end{proof}
\vskip 2mm

\noindent Another result which is of interest is to compare
$\K(\X_{RSS}, \Y_{RSS})$  with $\K(\X_{SRS}, \Y_{SRS})$. To this end,  we have
\begin{eqnarray*}
K(\X_{RSS}, \Y_{RSS})
&=& \sum_{i=1}^n \int f_{(i)}(x)  \left\{ \log\left(\frac{f(x)}{ g(x) }\right)  + \log \left(\frac{F^{(i-1)}(x) \bar{F}^{(n-i)}(x)}{G^{(i-1)}(x) \bar{G}^{(n-i)}(x)}\right)\right\} dx\\
&=& n  \int f(x) \log \frac{f(x)}{ g(x) } dx
 + \sum_{i=1}^n \int f(x)  i\binom{n}{i} F^{(i-1)}(x) \bar{F}^{(n-i)}(x) \log\left( \frac{F^{(i-1)}(x) \bar{F}^{(n-i)}(x)}{G^{(i-1)}(x) \bar{G}^{(n-i)}(x)}\right) dx\\
 &=& \K(\X_{SRS}, \Y_{SRS}) + A_n(F, G),
\end{eqnarray*}
where
$$ A_n(F,G)=  \sum_{i=1}^n \int _0^1 i\binom{n}{i} u^{i-1} (1-u) ^{n-i} \log\left(\frac{ u^{i-1} (1-u)^{n-i}}{ \{G(F^{-1}(u))\}^ {i-1}  \{\bar{G}(F^{-1}(u))\}^ {n-i} } \right) du.$$
Here again  $\K(\X_{SRS}, \Y_{SRS}) \leq \K(\X_{RSS}, \Y_{RSS})$ if $A_n(F, G)\geq 0$. Furthermore,  it is easy to show that
\begin{eqnarray*}
A_n(F, G) = -\frac{n(n-1)}{2}  - n(n-1) \int_0^1 \left\{ u\log G(F^{-1}(u)) + (1-u) \bar{G}(F^{-1}(u))\right\} du.
\end{eqnarray*}

Note that in this case  $A_n(F, G)$ depends on the parent distributions of $X$ and  $Y$ samples.

\begin{example}
 Suppose that $X$ and $Y$ have the exponential distributions  with parameters $\lambda_1$ and $\lambda_2$, and pdfs $f(x)= \lambda_1 e^{-\lambda_1 x}$ and $g(y)=\lambda_2 e^{-\lambda_2 y} $,  respectively. Then
\begin{eqnarray*}
A_n(F, G)& =& -\frac{n(n-1)}{2}  - \frac{n(n-1)}{3}\left(\frac{\lambda_2}{\lambda_1}\right)^2- n(n-1)\left\{ \sum_{i=1}^{\infty} \frac{1}{i(i+1)}-\frac{\lambda_2}{\lambda_1}\sum_{i=1}^{\infty}\frac{1}{i(i+2)}\right\}\\
&=&n(n-1)\left[\frac{\lambda_2}{\lambda_1}\left(\frac{1}{4}-\frac{\lambda_2}{3\lambda_1}\right)-\frac{3}{2} \right]<0.
\end{eqnarray*}
So, for  the exponential distributions   $\K(\X_{SRS}, \Y_{SRS}) > \K(\X_{RSS}, \Y_{RSS})$.
\end{example}

\section{Concluding Remarks}\label{sec-concluding}
In this paper, we have considered the information content of the  perfect and imperfect RSS data using the Shannon entropy, R\'enyi and Kullback-Leibler information measures. First, we have compared the Shannon entropy of a SRS data ($\X_{RSS}$) to the Shannon entropy of  perfect RSS ($\X_{RSS}$) and  imperfect  RSS ($\X_{RSS}^*$)  of the same size. In this case, we have established analytically that the Shannon entropies of $\X_{RSS}$ and $\X_{RSS}^*$ are less that the Shannon entropy of $\X_{SRS}$.	We also showed that the Shannon entropy of the RSS data will increase in the presence of the ranking error.  Next, we have established similar behaviour   under  the R\'enyi information when $0<\alpha<1$, while the results for the case were $\alpha>1$ remain unsolved. We conjectured that similar results hold for the case where $\alpha>1$ and provided  examples to support the conjecture.     Similar results are obtained under the Kullback-Leibler information measure. The results of this paper show  some  desirable properties of  ranked set sampling compared with the commonly used simple random sampling in the context of the information theory.

\section*{Acknowledgements}
Mohammad Jafari Jozani  acknowledges the research support of the Natural Sciences and Engineering Research Council of Canada.

\end{document}